\newenvironment{customthm}[1]
  {\innercustomthm}
  {\endinnercustomthm}
\newenvironment{customle}[1]
  {\innercustomle}
  {\endinnercustomle}
\newtheorem{theorem}{Theorem}[section]
\newtheorem{lemma}[theorem]{Lemma}
\begin{document}

\centerline{{\bf Correction to the paper}}

\centerline{{\bf ``Some remarks on Davie's uniqueness theorem''}}

\vskip .2in

\centerline{{\bf
A.V.~Shaposhnikov\footnote{e-mail: shal1t7@mail.ru}}}

\vskip .2in
\centerline{{\bf Abstract}}

\vskip .1in
The property 4 in Proposition 2.3 from the paper ``Some remarks on Davie's uniqueness theorem'' is replaced with a weaker assertion which is sufficient for the proof of the main results. Technical details and improvements are given.

AMS Subject Classification: 60H10, 34F05, 46N20.

Keywords: Brownian motion, stochastic differential equation, pathwise uniqueness

\vskip .2in

\section{Introduction}

We consider the stochastic differential equation
\begin{equation}\label{eq:main}
X_{t} = x + W_{t} + \int_{0}^{t}b(s, X_s)\,ds.
\end{equation}
In the paper \cite{D} the following theorem was proved:
\begin{theorem}\label{th:borel_case}
Let $b: [0, T]\times \mathbb{R}^{d}\mapsto\mathbb{R}^{d}$ be a Borel measurable bounded mapping.
Then for almost all Brownian paths the equation \ref{eq:main} has exactly one solution.
\end{theorem}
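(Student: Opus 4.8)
The plan is to follow Davie's strategy from \cite{D} and prove the much stronger \emph{path-by-path} uniqueness: there is a set $\Omega_0$ of full Wiener measure such that for every $\omega\in\Omega_0$, any two continuous maps $X,Y\colon[0,T]\to\mathbb R^d$ satisfying \eqref{eq:main} with $W=W(\omega)$ coincide. Existence of a solution is not at issue — a weak solution exists by Girsanov's theorem because $b$ is bounded, and together with the path-by-path statement (which in particular yields pathwise uniqueness) Yamada--Watanabe produces a strong solution, so for a.e.\ $\omega$ a solution exists — hence the whole content is uniqueness. After rescaling we may and do assume $\|b\|_\infty\le 1$.

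The first step is to recast the equation in terms of ``drift parts''. If $X$ solves \eqref{eq:main}, then $\bar X_t:=X_t-W_t=x+\int_0^t b(s,X_s)\,ds$ is $1$-Lipschitz and $b(r,X_r)=b(r,W_r+\bar X_r)$; likewise for $Y$. Thus for two solutions driven by the same $W$,
\begin{equation*}
X_t-Y_t=\bar X_t-\bar Y_t=\int_0^t\bigl[b(r,W_r+\bar X_r)-b(r,W_r+\bar Y_r)\bigr]\,dr,
\end{equation*}
with $\bar X,\bar Y$ both $1$-Lipschitz and $\bar X_0=\bar Y_0=x$. Everything will follow from an a priori bound, valid for $\omega\in\Omega_0$, on integrals $\int_s^t[b(r,W_r+u_r)-b(r,W_r+v_r)]\,dr$, \emph{uniform} over $1$-Lipschitz paths $u,v$. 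Concretely I would aim for: a finite random constant $C(\omega)$ (not depending on the solutions) and a modulus $\psi$ with $\psi(h)\to0$ as $h\downarrow0$, such that for all $0\le s\le t\le T$ and all $1$-Lipschitz $u,v\colon[s,t]\to\mathbb R^d$,
\begin{equation}\label{eq:keyest}
\Bigl|\int_s^t\bigl[b(r,W_r+u_r)-b(r,W_r+v_r)\bigr]\,dr\Bigr|\le C(\omega)\,\psi(t-s)\,\sup_{r\in[s,t]}|u_r-v_r|.
\end{equation}
This is, in essence, the statement of Proposition~2.3 of the original paper; its fourth clause is what the present note replaces.

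To establish \eqref{eq:keyest} I would argue in two stages. \emph{Frozen shifts.} Fix $x,y\in\mathbb R^d$ and put $F(s,t):=\int_s^t[b(r,W_r+x)-b(r,W_r+y)]\,dr$. Expanding $\mathbb E\,F(s,t)^{2m}$ as a $(2m)$-fold ordered time integral and applying the Markov property of $W$, each integration in space is against a Gaussian transition density $p_\tau$ and produces a factor
\begin{equation*}
\Bigl|\int_{\mathbb R^d}\bigl[b(r,z+x)-b(r,z+y)\bigr]\,p_\tau(z-w)\,dz\Bigr|\le \|b\|_\infty\,\bigl(\|p_\tau(\cdot-x)-p_\tau(\cdot-y)\|_{L^1}\wedge 1\bigr)\le \frac{C_d\,|x-y|}{\sqrt{\tau}}\wedge 1,
\end{equation*}
using the total-variation bound for shifted Gaussians. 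Carrying this through the simplex integral yields a bound on $\mathbb E\,F(s,t)^{2m}$ by a positive power of $|x-y|\wedge1$ times a positive power of $t-s$, with a factorially controlled combinatorial constant. Kolmogorov's continuity criterion (in the variables $s,t,x,y$) together with Borel--Cantelli then upgrades this to an almost sure modulus: on compacts, $|F(s,t)|\le A(\omega)\,(t-s)^{1/2}\,|x-y|^{\beta}$ for any fixed $\beta<1$, with $A(\omega)<\infty$ a.s. \emph{Moving shifts.} To pass from constant shifts to $1$-Lipschitz paths $u,v$, partition $[s,t]$ into subintervals of mesh $\delta$, replace $u,v$ by their values at the left endpoints, apply the frozen-shift bound on each subinterval and sum; the $1$-Lipschitz property controls the error introduced by freezing by $O(\delta)$ per unit time, and optimising in $\delta$ produces an inequality of the form \eqref{eq:keyest}. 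This second stage, performed with the required uniformity, is the delicate point.

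Granting \eqref{eq:keyest}, the theorem closes quickly. Applying it with $u=\bar X$, $v=\bar Y$ and using $\bar X_0=\bar Y_0$ gives, for every $t\le T$,
\begin{equation*}
\sup_{r\le t}|X_r-Y_r|\le C(\omega)\,\psi(t)\,\sup_{r\le t}|X_r-Y_r|.
\end{equation*}
Pick $t_0>0$ with $C(\omega)\psi(t_0)<\tfrac12$; then $X\equiv Y$ on $[0,t_0]$, and, since neither $C(\omega)$ nor $t_0$ depends on the solutions or on the initial point, repeating the argument on $[t_0,2t_0]$, $[2t_0,3t_0],\dots$ gives $X\equiv Y$ on $[0,T]$ after $\lceil T/t_0\rceil$ steps. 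Thus for $\omega\in\Omega_0$ the solution of \eqref{eq:main} is unique, which is the assertion. The principal obstacle is the ``moving shifts'' stage: bounding $\int_s^t[b(r,W_r+u_r)-b(r,W_r+v_r)]\,dr$ simultaneously for \emph{all} $1$-Lipschitz $u,v$ — not merely for countably many constant shifts — by an almost surely finite constant. The frozen-shift moment estimate is a Gaussian computation; it is the transfer to moving shifts, and the chaining/discretisation it requires, where the subtlety lies and where the original Proposition~2.3 needed the correction supplied here.
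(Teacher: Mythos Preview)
Your route is Davie's original one and is genuinely different from the paper's. The paper does \emph{not} prove a direct estimate on $\int_s^t[b(r,W_r+u_r)-b(r,W_r+v_r)]\,dr$ uniform over Lipschitz paths and then close by Gronwall. Instead it first constructs, via Proposition~\ref{pr:flow}, a H\"older flow $\varphi_{s,t}(x)$ of solutions; this is what Proposition~2.3 actually asserts --- regularity of the flow map $x\mapsto\varphi_{s,t}(x)$, \emph{not} your displayed integral estimate, so your identification of the two is a misreading. Uniqueness is then obtained by comparing an arbitrary solution $Y$ to the flow through $f(t)=X(x,0,r,W)-X(Y_t,t,r,W)$: Lemma~\ref{le:refined_main_borel_estimate} (the paper's analogue of your integral estimate, but only of size $Cl^{4/3}$ under $\|h_1-h_2\|_\infty\le 4l$, not linear in $\|h_1-h_2\|_\infty$) bounds $|Y_t-X(Y_s,s,t,W)|\le C l^{4/3}$ on an interval of length $l$; the corrected property~4 of the flow then gives $|f(t)-f(s)|\le C l^{(4/3)(4/5)}=Cl^{16/15}$; and a dyadic sum over $M=2^k$ subintervals yields $|f(r)|\le C M^{-1/15}\to 0$.

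There is also a gap in your execution. Your closing Gronwall step requires your key estimate to be \emph{linear} in $\sup|u-v|$, but your frozen-shift stage only delivers $|F(s,t)|\le A(\omega)(t-s)^{1/2}|x-y|^\beta$ with $\beta<1$ (Kolmogorov's criterion cannot reach $\beta=1$), and the moving-shift discretisation you sketch does not upgrade $\beta$ to~$1$. With a sublinear exponent the inequality $Z\le C(\omega)\psi(t)Z^\beta$, $Z=\sup_{r\le t}|X_r-Y_r|$, does not force $Z=0$ on any interval (e.g.\ $\beta=\tfrac12$, $\psi(t)=t$, $Z(t)=t^2$). Davie himself does not close by a contraction: after the sublinear frozen-shift bound he runs a dyadic scheme in which the surplus time-regularity compensates the deficit in the spatial exponent --- structurally much closer to the paper's $2^k$-summation than to the Gronwall you wrote. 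So the outline is right in spirit, but the endgame as written does not go through; you would need to replace the contraction by Davie's dyadic argument (or, as the paper does, by the flow).
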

In the work \cite{Me} an alternative approach was proposed. 
However as it was pointed out in 
\cite{PR} (see Remark 5.3, p. 24) the uniform H\"older continuity 
(the property 4 from Proposition 2.3 in \cite{Me}) doesn't immediately follow from Kolmogorov continuity theorem and the moments estimates established in
\cite{Me}. Below we present a simple modification of Kolmogorov continuity theorem
and adjust the proofs of the main results from \cite{Me} accordingly. Some other observations regarding the regularity of the flow, in particular, a simple treatment of the case of a bounded drift,
are not included into this short note and will be discussed in a separate paper.

\section{Auxiliary results}
\begin{customthm}{2.3}\label{pr:flow} Let
$$
b \in L^{q}\bigl([0, T], L^{p}(\mathbb{R}^{d})\bigr), \ \frac{d}{p} + \frac{2}{q} < 1.
$$
Then, there exists a H\"older flow of solutions to the equation \ref{eq:main}. More precisely,
for any filtered probability space
$(\Omega, \mathcal{F}, \{\mathcal{F}_{t}\}, P)$ and a Brownian motion $W$, there exists a mapping
$(s, t, x, \omega) \mapsto \varphi_{s, t}(x)(\omega)$ with values in $\mathbb{R}^{d}$, defined for
$0 \leqslant s \leqslant t \leqslant T, \ x \in \mathbb{R}^{d}, \ \omega \in \Omega$,
such that for each $s \in [0, T]$ the following conditions hold:
\begin{enumerate}[{1.}]
\item for any $x \in \mathbb{R}^{d}$ the process $X_{s, t}^{x} = \varphi_{s, t}(x)$ is a continuous $\mathcal{F}_{s,t}$ adapted
solution to the equation \ref{eq:main},
\item $P$-almost surely the mapping $x \mapsto \varphi_{s, t}(x)$ is a homeomorphism,
\item $P$-almost surely for all $x \in \mathbb{R}^{d}$ and $0 \leqslant s \leqslant u \leqslant t \leqslant 1$
$$\varphi_{s, t}(x) = \varphi_{u, t}(\varphi_{s, u}(x)),$$
\item For any 
$\alpha \in (0, 1)$, $\eta >0$, $N > 0$
and a given increasing sequence $S$ of finite sets $\{S_n\}_{n = 0}^{\infty}$ with  
$|S_{n}| \leq 2^{\eta n}$
there exists a set $\Omega'$ of probability $1$
such that for any $s \in S_n$
$x, y \in \mathbb{R}^{d}$ with $|x|, |y| < N, |x -y| \leq 2^{-n}$ and
each $t \in [s, T]$
$$|\varphi_{s, t}(x) - \varphi_{s, t}(y)| \leqslant C(\alpha, T, N, S, \omega)|x - y|^{\alpha}.$$
\end{enumerate}
\end{customthm}
Following the proof given in \cite {Me} we consider the process
$$
Y_t := \psi_{t}(t, X_t) = X_t + U(t, X_t)
$$ 
which is the unique solution of the transformed equation
$$
dY_t = \tilde{b}(t, Y_t)\,dt + \tilde{\sigma}(t, Y_t)\,dW_t,
$$
for details see \cite{Me}.
In the work \cite{Me} the following bound was established:
\begin{equation}
\mathbb{E}\sup\limits_{t \in [0, T]}|Y_{t}^{x} - Y_{t}^{y}|^{a}
\leqslant C(a, T)\bigl(|x - y|^{a} + |x - y|^{a - 1}\bigr),
\end{equation}
It is easy to see that the same arguments provide the estimate
$$
\sup_{s \in [0, T]} \mathbb{E}\sup_{t \in [s, T]} |Y_{s,t}^{x} - Y_{s, t}^{y}|^a \leq 
C(a, T)(|x - y|^a + |x - y|^{a - 1})
$$
Since $\psi_t, \psi_{t}^{-1}$ are Lipschitz continuous uniformly in time an analogous bound holds for $X_{s, t}^x$
$$
\sup_{s \in [0, T]} \mathbb{E}\sup_{t \in [s, T]} |X_{s,t}^{x} - X_{s, t}^{y}|^a \leq 
C(a, T)(|x - y|^a + |x - y|^{a - 1})
$$
We can assume (see  \cite{FF})
that for each $s$ the mapping $X_{s,t}^{x}$ is jointly continuous with respect to $t, x$. To complete the proof we will need the following lemma:
\begin{lemma}\label{le:kolmogorov}
Let $X(s, x)$ be a continuous with respect to $x$ process with values in a complete metric space $(M, \varrho_M)$ on 
$\mathcal{S} \times [0, 1]^d$.
Assume that for some $a, b > 0$
$$ 
\sup_{s \in \mathcal{S}}\mathbb{E} \varrho_{M} (X_{s}(u), X_{s}(v))^{a} \leq |u - v|^{d + b}, \ u, v \in [0, 1]^d
$$
For any $\alpha \in (0, b/a), \eta \in (0, b - \alpha a)$ 
and any increasing sequence $S$
of finite subsets $\{S_n\}_{n = 0}^{\infty}$ with  
$|S_{n}| \leq 2^{\eta  n}$
there exists a set $\Omega'$ of probability $1$ such that
$$ 
\varrho_{M} (X_{s}(u), X_{s}(v)) \leq C(\alpha, \eta, S, \omega)|u - v|^{\alpha} \ s \in S_n,  u, v \in [0, 1]^d, 
|u - v| \leq 2^{-n},
\omega \in \Omega',
$$ 
\end{lemma}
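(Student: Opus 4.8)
The plan is to adapt the classical dyadic chaining proof of Kolmogorov's continuity theorem, but to perform the union bound over the spatial grid and over the index $s$ \emph{simultaneously} at each dyadic scale; the only genuinely new input is that the hypothesis $|S_k|\le 2^{\eta k}$ together with $\eta<b-\alpha a$ keeps the resulting failure probabilities summable.

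Fix $\alpha\in(0,b/a)$ and $\eta\in(0,b-\alpha a)$, and note $b-\alpha a-\eta>0$. For $k\ge 0$ I would let $\mathbb{D}_k=\{j2^{-k}:j\in\{0,\dots,2^k\}^d\}\subset[0,1]^d$ be the dyadic grid of mesh $2^{-k}$, and for $s\in S_k$ define
$$
\Delta_{s,k}:=\max\bigl\{\varrho_{M}\bigl(X_s(p),X_s(q)\bigr):p,q\in\mathbb{D}_k,\ \|p-q\|_{\infty}\le 2^{-k}\bigr\}.
$$
At most $c_d2^{dk}$ pairs enter this maximum, each with $|p-q|\le\sqrt d\,2^{-k}$, so Markov's inequality and the moment hypothesis give $P(\varrho_M(X_s(p),X_s(q))>2^{-k\alpha})\le(\sqrt d)^{d+b}2^{-k(d+b-\alpha a)}$. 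Summing over the pairs and then over $s\in S_k$ yields
$$
P\bigl(\exists\,s\in S_k:\ \Delta_{s,k}>2^{-k\alpha}\bigr)\le C_d\,2^{\eta k}2^{-k(d+b-\alpha a)}2^{dk}=C_d\,2^{-k(b-\alpha a-\eta)},
$$
which is summable in $k$. By Borel--Cantelli there is a full-measure set $\Omega'$ so that for each $\omega\in\Omega'$ there is an integer $K=K(\omega)$ with $\Delta_{s,k}(\omega)\le 2^{-k\alpha}$ for all $k\ge K$ and all $s\in S_k$.

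Next I would convert this into the pointwise Hölder bound by chaining. For $u\in[0,1]^d$ let $u^{(k)}\in\mathbb{D}_k$ be the coordinatewise round-down of $u$; then $\|u-u^{(k)}\|_\infty\le 2^{-k}$, $\|u^{(k)}-u^{(k+1)}\|_\infty\le 2^{-(k+1)}$, and $X_s(u)=\lim_k X_s(u^{(k)})$ by continuity in $x$. Given $\omega\in\Omega'$, $n\ge 0$, $s\in S_n$ and $u,v$ with $0<|u-v|\le 2^{-n}$, pick $m\ge n$ with $2^{-m-1}<|u-v|\le 2^{-m}$. If $m\ge K(\omega)$, then (since $S$ is increasing, $s$ lies in every grid set $S_j$ with $j\ge n$, in particular in those used below) the telescoping estimate gives $\varrho_M(X_s(u),X_s(u^{(m)}))\le\sum_{k\ge m+1}\Delta_{s,k}\le C_\alpha 2^{-m\alpha}$, and similarly for $v$; moreover $u^{(m)},v^{(m)}$ lie in $\mathbb{D}_m$ at $\ell^\infty$-distance $\le 3\cdot 2^{-m}$, and joining them by at most $3d$ unit grid steps gives $\varrho_M(X_s(u^{(m)}),X_s(v^{(m)}))\le 3d\,\Delta_{s,m}\le 3d\,2^{-m\alpha}$; the triangle inequality then yields $\varrho_M(X_s(u),X_s(v))\le C_{\alpha,d}2^{-m\alpha}\le C'_{\alpha,d}|u-v|^\alpha$. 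In the complementary case $m<K(\omega)$ one has $n<K(\omega)$ and $|u-v|>2^{-K(\omega)}$, and now $s$ belongs to the \emph{finite} set $S_{K(\omega)-1}$; since each $X_s(\cdot)$ is continuous on the compact cube, $D(\omega):=\max_{s\in S_{K(\omega)-1}}\sup_{p,q\in[0,1]^d}\varrho_M(X_s(p),X_s(q))<\infty$, so $\varrho_M(X_s(u),X_s(v))\le D(\omega)\le D(\omega)2^{K(\omega)\alpha}|u-v|^\alpha$. Taking $C(\alpha,\eta,S,\omega)$ to be the maximum of $C'_{\alpha,d}$ and $D(\omega)2^{K(\omega)\alpha}$ finishes the argument.

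I expect the main obstacle to be purely the book-keeping of uniformity in $s$: the almost-sure estimate $\Delta_{s,k}\le 2^{-k\alpha}$ is available only from level $K(\omega)$ onward, so the finitely many indices $s\in S_n$ with $n<K(\omega)$ must be handled separately at the coarse scales $|u-v|\in(2^{-K(\omega)},2^{-n}]$, which is exactly where continuity of $X_s(\cdot)$ on the compact cube is invoked. Everything else is the standard Kolmogorov chaining, the one quantitatively essential point being that $|S_k|\le 2^{\eta k}$ with $\eta<b-\alpha a$, so that the per-level failure probabilities, after the union over $S_k$, still sum to a finite quantity.
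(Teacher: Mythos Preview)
Your argument is correct and follows the same dyadic-chaining strategy as the paper: control the maximal one-step increment $\Delta_{s,k}$ (the paper's $Y(s,n)$) simultaneously over $s\in S_k$ using $|S_k|\le 2^{\eta k}$ with $\eta<b-\alpha a$, then chain. The only noteworthy difference is in how the level-wise control is obtained almost surely: the paper bounds $\mathbb{E}\sum_{n}\sum_{s\in S_n}(2^{\alpha n}Y(s,n))^a<\infty$, which immediately yields a random constant $C'(\omega)$ with $Y(s,n)\le C'(\omega)2^{-\alpha n}$ for \emph{all} $n$ and $s\in S_n$, whereas you use Borel--Cantelli and therefore must treat the finitely many coarse scales $m<K(\omega)$ separately via the compactness/continuity argument. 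Both variants are standard; the paper's summability trick is marginally cleaner since it avoids the case split, while your version makes the role of continuity of $X_s(\cdot)$ more explicit.
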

The proof is a minor modification of the standard proof of Kolmogorov continuity theorem, for details see \cite{Ka}.
\begin{proof}
Let $\alpha \in (0, b/a)$. Define $D_n$ as
$$
D_n := \Bigl\{(k_1, \ldots, k_d)2^{-n}; k_1, \ldots, k_d \in \{1, \ldots, 2^n \} \Bigr\}
$$
Let 
$$
Y(s, n) := \max \Bigl\{ \varrho_{M}(X_{s}(u), X_{s}(v)); u, v \in D_n, |u - v| = 2^{-n} \Bigr\}
$$ 
Then
$$
\mathbb{E}(2^{\alpha n} Y(s, n))^a \leq C 2^{\alpha a n} 2^{dn} (2^{-n})^{d + b} \leq
C 2^{(\alpha a - b)n}
$$
Now one readily sees that
$$
\mathbb{E}\sum_{n = 1}^{\infty}\sum_{s \in S_n} (2^{\alpha n} Y(s, n))^a < \infty
$$
Consequently, there exists a set $\Omega'$ of full measure such that 
$$
\sum_{n = 1}^{\infty}\sum_{s \in S_n} (2^{\alpha n} Y(s, n))^a < C(\omega) < \infty, 
\omega \in \Omega'.
$$
in particular
$$
Y(s, n)(\omega) \leq C'(\omega) 2^{-\alpha n}, \ 
s \in S_{n},  \omega \in \Omega'
$$
Using the fact that the sequence $S$ is increasing we obtain the bound
$$
Y(s, m)(\omega) \leq C'(\omega) 2^{-\alpha m}, \ 
s \in S_{n}, m \geq n,  \omega \in \Omega'
$$
Now let $s$ be a fixed point in $S_{n}$.
Applying the standard arguments one can see that
for each $m \geq n$ and any $u, v \in D_m$ such that
$|u - v| \leq 2^{-n}$ the following inequality holds:
$$
\varrho_{M}(X_{s}(u), X_{s}(v)) \leq C''(S, \omega) |u - v|^{\alpha}
$$ 
Now it is easy to complete the proof.
\end{proof}
Now let us come back to the proof of the property~4.
Define a random mapping $J$ from $[0, T] \times [-N, N]^d \time \Omega$
to the Banach space $C([0, T], \mathbb{R}^d)$ equipped with the standard 
sup-norm as follows:
$$
J(\omega, s, x)(t) := X_{s, \min(s + t, T)}^{x}(\omega).
$$
The joint continuity of $X_{s, t}^x$ with respect to $t, x$ immediately implies the mapping $J$
is continuous.
Next, the estimate 
$$
\sup_{s \in [0, T]} \mathbb{E}\sup_{t \in [s, T]} |X_{s,t}^{x} - X_{s, t}^{y}|^a \leq 
C(a, T)(|x - y|^a + |x - y|^{a - 1})
$$
can be written as
$$
\sup_{s \in [0, T]} \mathbb{E} \|J(s, x) - J(s, y)\|^a \leq C(a, T)(|x - y|^a + |x - y|^{a -1})
$$
For any $\alpha \in (0, 1)$ and $\eta > 0$ one can find
sufficiently large $a > 0$ such that
$$
\alpha < \frac{a - 1 - d}{a}, \  \eta < a - 1 - d - \alpha a
$$
so now it is easy to complete the proof applying Lemma \ref{le:kolmogorov}.

\section{\sc Main results}
In this section we adjust the proofs of the main results stated in the paper \cite{Me} using the corrected version of the property 4 from Proposition \ref{pr:flow}.

\begin{theorem}\label{th:holder_case}
Assume that the coefficient $b$ satisfies the following conditions:
\begin{enumerate}[{1.}]
\item there exists $M_{1} \in L^{q_1}\bigl([0, T], \mathbb{R}\bigr)$ such that
$$|b(t, x)| \leqslant M_{1}(t), \ \ t\in[0, T],\ x\in\mathbb{R}^{d}$$
\item there exists $M_{2} \in L^{q_2}\bigl([0, T], \mathbb{R}\bigr)$ and $\beta > 0$ such that
$$
|b(t, x) - b(t, y)| \leqslant M_2(t)|x - y|^{\beta}, \ \ t\in[0, T],\ x,y\in\mathbb{R}^{d}
$$
\item one has
$$
q_1 \geqslant q_2 > 2, \ \beta > 0, \ \frac{\beta}{p_1} + \frac{1}{p_2} > 1, \ \text{where} \
\frac{1}{p_1} + \frac{1}{q_1} = 1, \ \frac{1}{p_2} + \frac{1}{q_2} = 1.
$$
\end{enumerate}
Then there exist a set $\Omega'$ with $P(\Omega') = 1$ such that for each $\omega \in \Omega'$
the equation \ref{eq:main} has exactly one solution.
\end{theorem}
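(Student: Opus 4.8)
The plan is to establish \emph{pathwise uniqueness}: on the probability space of Proposition~\ref{pr:flow}, every continuous $\mathcal F_t$-adapted solution $X$ of \eqref{eq:main} with $X_0=x$ coincides almost surely with the flow solution $\varphi_{0,\cdot}(x)$. Since $\varphi_{0,\cdot}(x)$ is itself a solution, this proves the theorem; moreover the comparison below is carried out for a fixed Brownian path, the only probabilistic input being the regularity property~4 of the flow, so one in fact obtains uniqueness path by path. First I would reduce to the hypotheses of Proposition~\ref{pr:flow}: by condition~1 and $\sup_{[0,T]}|W|<\infty$ a.s., every solution stays on $[0,T]$ inside a ball of finite random radius, so after multiplying $b$ by a spatial cutoff one gets a drift in $L^{q_1}\bigl([0,T],L^p(\mathbb R^d)\bigr)$ with $d/p+2/q_1<1$ for $p$ large (this is where $q_1>2$ is used, and $q_1\ge q_2$ is used to keep the modified H\"older modulus in $L^{q_2}$), to which Proposition~\ref{pr:flow} applies; one then argues on the events $\{\text{radius}<R\}$ and lets $R\to\infty$.

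The argument rests on a deterministic local stability estimate together with a telescoping over a dyadic grid. For local stability: if $Z^1,Z^2$ solve \eqref{eq:main} on $[s,T]$ with $Z^1_s=Z^2_s$, then by condition~1 each $Z^i_r$ lies within $\|M_1\|_{L^{q_1}[0,T]}(r-s)^{1/p_1}$ of the common path $Z^i_s+W_r-W_s$, whence $|Z^1_r-Z^2_r|\le 2\|M_1\|_{L^{q_1}[0,T]}(r-s)^{1/p_1}$; inserting this into the bound $|Z^1_t-Z^2_t|\le\int_s^t M_2(r)|Z^1_r-Z^2_r|^\beta\,dr$ given by condition~2 and using H\"older's inequality in time yields
$$
\sup_{t\in[s,s']}|Z^1_t-Z^2_t|\le C_0\,(s'-s)^{\kappa},\qquad \kappa:=\frac{\beta}{p_1}+\frac1{p_2}>1,
$$
the strict inequality being exactly condition~3, with $C_0$ depending only on $\|M_1\|_{L^{q_1}},\|M_2\|_{L^{q_2}},\beta,p_1,p_2$. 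Now fix $\alpha\in(1/\kappa,1)$ (nonempty since $\kappa>1$) and $\eta>1/\kappa$ (so $\eta\kappa>1$), take $S_n$ to be a uniform grid on $[0,T]$ of mesh of order $2^{-\eta n}$, so that $|S_n|\le 2^{\eta n}$, and let $\Omega'$ be the full-measure set furnished by property~4 of Proposition~\ref{pr:flow} for these $\alpha,\eta,S$ and $N$ large. Put $g(s):=\sup_{t\in[s,T]}|X_t-\varphi_{s,t}(X_s)|$. Using the flow property~3 to write, for consecutive grid points $t_{j-1}<t_j$ and $t\ge t_j$,
$$
X_t-\varphi_{t_{j-1},t}(X_{t_{j-1}})=\bigl(X_t-\varphi_{t_j,t}(X_{t_j})\bigr)+\Bigl(\varphi_{t_j,t}(X_{t_j})-\varphi_{t_j,t}\bigl(\varphi_{t_{j-1},t_j}(X_{t_{j-1}})\bigr)\Bigr),
$$
bounding the first term by $g(t_j)$ and the second, via property~4, by $C\,|X_{t_j}-\varphi_{t_{j-1},t_j}(X_{t_{j-1}})|^{\alpha}$ (legitimate since local stability gives $|X_{t_j}-\varphi_{t_{j-1},t_j}(X_{t_{j-1}})|\le C_0 h_n^{\kappa}\le 2^{-n}$ for $n$ large, and $t_j\in S_n$), one gets $g(t_{j-1})\le g(t_j)+C_1 h_n^{\kappa\alpha}$. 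Since $g(T)=0$ and the grid has of order $h_n^{-1}$ cells, summation gives $g(0)\le C_1 h_n^{\kappa\alpha-1}\to0$ as $n\to\infty$; hence $g(0)=0$ on $\Omega'$, i.e.\ $X_t=\varphi_{0,t}(x)$ for all $t$.

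The hard part will be closing this telescoping. One cannot iterate the flow's H\"older-$\alpha$ dependence on the starting point (with $\alpha<1$) over the $\simeq 2^{\eta n}$ cells, as that would blow up; the point is that in the decomposition above the per-cell discrepancies are \emph{added} rather than composed, and each is only of order $h_n^{\kappa}$ with $\kappa>1$ — this is precisely where the H\"older continuity of $b$ (condition~2) and the scaling inequality (condition~3) are used — so the total is of order $h_n^{\kappa\alpha-1}$ and one only needs $\alpha$ close enough to $1$ that $\kappa\alpha>1$. The accompanying bookkeeping is to choose the mesh $h_n$ and the exponent $\eta$ so that simultaneously $|S_n|\le 2^{\eta n}$ and the accumulated error at each grid point stays below $2^{-n}$, so that property~4 is applicable at each of the $\simeq 2^{\eta n}$ starting times used; this is exactly the regime for which the corrected, cardinality-controlled form of property~4 is designed, the original uniform-in-$s$ statement being neither available nor needed.
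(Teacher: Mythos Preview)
Your argument is correct and follows essentially the same route as the paper: the key deterministic local stability estimate $|Y_r-\varphi_{u,r}(Y_u)|\le C|r-u|^{\kappa}$ with $\kappa=\beta/p_1+1/p_2>1$, the application of the corrected property~4 at dyadic starting times to convert this into a per-cell increment of order $h_n^{\kappa\alpha}$, and the telescoping (summing rather than composing) to conclude. The only differences are cosmetic bookkeeping---the paper takes $\eta=1$ and the standard dyadic grid, fixes a dyadic target time $t$ and works with $f(s)=\varphi_{s,t}(Y_s)-\varphi_{0,t}(x)$, whereas you introduce a general $\eta>1/\kappa$ and take a supremum over target times via $g(s)$---but the substance is identical.
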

\begin{proof}
Let $Y_t$ be a solution to the equation \ref{eq:main} for a fixed Brownian trajectory $W$.
Then the following estimate holds:
$$
\max\limits_{t \in [0, T]}|Y_t|
\leqslant |x| + \max_{t\in [0, T]}|W_t| + T^{\frac{1}{p_1}}\|M_1\|_{L^{q_1}[0, T]} =: M(x, W),
$$
so without loss of generality we can assume that $b(t, x) = b(t, x)I_{\{|x| < N\}}$ for some $N > 0$.
Then Proposition \ref{pr:flow} (it is clear that one can take $q_1$ for $q$
and any sufficiently large positive number for $p$) yields
that $P$-almost surely the equation \ref{eq:main} has a H\"older-continuous flow
of solutions which will be denoted by
$X(s, t, x, W), \ s \leqslant t, \ x \in \mathbb{R}^{d}.$
$$
1 + \gamma := \frac{\beta}{p_1} + \frac{1}{p_2}, \ \gamma > 0. 
$$
Let us pick $\alpha \in (0, 1)$ such that 
$$
\frac{\alpha\beta}{p_1} + \frac{\alpha}{p_2} = 1 + \delta, \ \delta > 0.
$$

Let us estimate $|Y_r - X(u, r, Y_u, W)|$. It is clear that we have the following trivial bound:
\begin{multline*}
|Y_r -  X(u, r, Y_u, W)| \leqslant \int_{u}^{r}|b(s, Y_s) - b(s, X(u, s, Y_u, W))|\,ds \leqslant\\
\leqslant 2\int_{u}^{r}M_{1}(s)\,ds \leqslant 2\|M_1\|_{L^{q_1}[0, T]}|r - u|^{\frac{1}{p_1}}\\
\end{multline*}
The previous estimate can be improved if we take into account the H\"older-continuity of the coefficient $b$:
\begin{multline*}
|Y_r -  X(u, r, Y_u, W)| \leqslant \int_{u}^{r}|b(s, Y_s) - b(s, X(u, s, Y_u, W))|\,ds \leqslant\\
\leqslant \int_{u}^{r}M_{2}(s)|Y_s - X(u, s, Y_u, W)|^{\beta}\,ds
\leqslant K'\int_{u}^{r}M_{2}(s)|r - u|^{\frac{\beta}{p_1}}\,ds \leqslant\\
\leqslant K'\|M_{2}\|_{L^{q_2}[0, T]}|r - u|^{\frac{\beta}{p_1} + \frac{1}{p_2}}
= K'\|M_{2}\|_{L^{q_2}[0, T]}|r - u|^{1 + \gamma}. \\
\end{multline*}
Define sets $\{S_n\}$ as
$$
S_n := \Bigl\{k/{2^n}; k \in \{0, 1, \ldots, 2^n - 1\}\Bigr\}, \
|S_n| = 2^n
$$  
Using the property $4$ from Proposition \ref{pr:flow}
with $\eta = 1$ and $S = \{S_n\}_{n = 1}^{\infty}$ 
we obtain $\Omega'$ with $P(\Omega') = 1$ 
such that the following estimate holds:
$$
|X(s, t, x, W) - X(s, t, y, W)| \leq
C(\alpha, T, N, \omega)|x - y|^{\alpha}, \
|x - y| \leq \frac{1}{2^n}, s \in S_n
$$

Now let us prove that for each trajectory $W \in \Omega'$ 
the equation
\ref{eq:main} has exactly one solution.
Let us choose a sufficiently large number $K$.
Let $t \in S_{k'}$, where $k' \geq K$. Define an auxiliary function $f$ by the formula
$$
f(s) = X(s, t, Y_s, W) - X(0, t, x, W), \ s \in [0, t].
$$
Let $k \geq k'$ and $u, r$ be of the form
$\frac{i}{2^k}, \frac{i + 1}{2^k}$ respectively,
in particular $u, r \in S_k$.
Recall that
$$
|Y_r -  X(u, r, Y_u, W)| \leq C |r - u|^{1 + \gamma} \leq 
C 2^{-k\gamma}2^{-k}
$$
Since $K$ is supposed to be sufficiently large we may assume that 
$C 2^{-K\gamma} \leq 1$. Consequently,
$$
|Y_r -  X(u, r, Y_u, W)| \leq 2^{-k}
$$
Then
\begin{multline*}
|f(r) - f(u)| = |X(r, t, Y_r, W) - X(u, t, Y_u, W)| = \\
= |X(r, t, Y_r, W) - X(r, t, X(u, r, Y_u, W), W)| 
\leqslant \\
\leqslant 
C(\alpha, S, T, M(x, W), \omega)|Y_r - X(u, r, Y_u, W)|^{\alpha}. \\
\end{multline*}
Finally,
$$
|f(r) - f(u)| \leqslant C(\alpha, S, T, M(x, W), \omega)|r - u|^{1 + \delta}.
$$

Due to the arbitrariness of $k$ we conclude
$$
f(t) = X(x, 0, t, W) - Y_t = 0.
$$
Since $t$ was an arbitrary dyadic number in $[0, 1]$  with  a sufficiently large denominator,
the continuity of $Y_t$ and $X(x, 0, t, W)$ implies the equality $Y_t = X(x, 0, t, W)$ for each $t \in [0, 1]$.
The proof is complete.
\end{proof}

Now we show how to prove uniqueness in the case of a Borel measurable drift 
following \cite{Me}.
Similarly to the proof of Theorem \ref{th:holder_case}, it is readily seen
that without loss of generality we can assume that
$b(t, x) = b(t, x)I_{\{|x| < N\}}$ and $\|b\|_{\infty} \leqslant 1$.

Below we will need the following set of functions:
\begin{multline*}
Lip_{N}\bigl([r, u], \mathbb{R}^{d}\bigr) := \\
:= \bigl\{h \in C\bigl([r, u], \mathbb{R}^{d}\bigr) \ \mid \ |h(t) - h(s)| \leqslant |t - s| \ s,t \in [r, u], \ \max\limits_{s \in [r, u]}|h(s)| \leqslant N \bigr\}
\end{multline*}
with the uniform metric $\varrho(h_1, h_2) = \|h_1 - h_2\|_{\infty}$.

The following result was proved in \cite{Me} and the corresponding arguments remain unchanged.

\begin{customle}{3.6}\label{le:refined_main_borel_estimate}
There exist constants $C, \zeta > 0$, independent of $l = u - r$, and a set $\Omega'$ such that
$$
P(\Omega \setminus \Omega') \leqslant C\exp\Bigl(-l^{-\zeta}\Bigr)
$$
and for any $h_1, h_2 \in Lip_{N}\bigl([r, u], \mathbb{R}^{d}\bigr)$ with 
$\|h_1 - h_2\|_{\infty} \leqslant 4l$, $W \in \Omega'$
the following inequality holds:
$$
|\varphi(h_1, W) - \varphi(h_2, W)| \leqslant Cl^{\frac{4}{3}}.
$$
\end{customle}

We can now proceed to the proof of Theorem \ref{th:borel_case}.

\begin{proof}
Let us fix a positive number $N$.
Let $C, \zeta$ be constants found in Lemma \ref{le:refined_main_borel_estimate}.
For each $k$ we split the interval $[0, 1]$ into $M = 2^k$ closed subintervals
$$
\Bigl[0, \frac{1}{M}\Bigr], \ldots, \Bigl[\frac{M-1}{M}, M\Bigr].
$$
Applying Lemma \ref{le:refined_main_borel_estimate} to each interval
$\bigl[\frac{i}{M}, \frac{i + 1}{M}\bigr]$ we can find the corresponding sets $\Omega_{k, i}$.
Let
$$
\Omega_k := \bigcap_{i = 0}^{M - 1}\Omega_{k, i}.
$$
With the help of the Borel--Cantelli lemma it is easy to show that the set
$$
\Omega' := \liminf_{k \to \infty}\Omega_k = \bigcup_{K = 1}^{\infty}\bigcap_{k = K}^{\infty}\Omega_k
$$
has probability $1$.

Define $S_n$ as
$$
S_n := \Bigl\{k/{2^n}; k \in \{0, 1, \ldots, 2^n - 1\}\Bigr\}, \
|S_n| = 2^n
$$  
Using the property $4$ from Proposition \ref{pr:flow}
with $\eta = 1$ and $S = \{S_n\}_{n = 1}^{\infty}$ 
we may assume (removing, if necessary, a set of zero probability) 
that on the set $\Omega'$  
the following estimate holds:
$$
|X(s, t, x, W) - X(s, t, y, W)| \leq
C(\alpha, T, N, \omega)|x - y|^{\alpha}, \
|x - y| \leq \frac{1}{2^n}, s \in S_n
$$
Let us show that for each $W \in \Omega'$ such that
$$
|x| + \max\limits_{t \in [0, 1]}|W_t| + 1 \leqslant N,
$$
the equation \ref{eq:main} has a unique solution.
Indeed, let $Y_t$ be a solution to the equation \ref{eq:main}.
It is not difficult to see that $|Y_t| \leqslant N$ for each $t \in [0, 1]$.
Due to our choice of $\Omega'$ there exists $K = K(\omega)$ such that for
all $k \geqslant K$ the Brownian trajectory $W$ belongs to $\Omega_k$.
Let
$$
M' = 2^{k'}, \ \ r = \frac{i}{M'}, \ \ \text{where} \ k' \geqslant K.
$$
Let us define an auxiliary function $f$ on the interval $[0, r]$ by the following formula:
$$
f(t) := X(x, 0, r, W) - X(Y_t, t, r, W).
$$

We observe that for any $s \leqslant t$, by the definition of a flow we have
\begin{multline*}
f(t) - f(s) = -X\bigl(Y_t, t, r, W\bigr) + X\bigl(Y_s, s, r, W\bigr) = \\
= -X\bigl(Y_t, t, r, W\bigr) + X\bigl(X(Y_s, s, t, W), r, W\bigr).\\
\end{multline*}
The difference $Y_t - X(Y_s, s, t, W)$ can be represented as follows:
\begin{multline*}
Y_t - X(Y_s, s, t, W) = \\
=\int_{s}^{t}b\Bigl(u, Y_s + W_u - W_s + \int_{s}^{u}b(r, Y_r)\,dr\Bigr)\,du - \\
\int_{s}^{t}b\Bigl(u, Y_s + W_u - W_s + \int_{s}^{u}b(r, X_r)\,dr\Bigr)\,du = \\
= \int_{s}^{t}b\bigl(u, W_u + h_1(u)\bigr)\,du - \int_{s}^{t}b\bigl(u, W_u + h_2(u)\bigr)\,du,
\end{multline*}
where
$$
h_1(u) = Y_s - W_s + \int_{s}^{u}b(r, Y_r)\,dr, \ \ h_2(u) = Y_s - W_s + \int_{s}^{u}b(r, X_r)\,dr.
$$
Let $k \geqslant k'$ и $M = 2^{k}$.
If we take $s, t$ of the form $\frac{i}{M}$ and $\frac{i + 1}{M}$, respectively,
then we obtain the following estimate:
$$
|Y_t - X(Y_s, s, t, W)| \leq \frac{C}{M^{\frac{4}{3}}}
$$
Since we may assume that $M$ is sufficiently large this
inequality implies the bound
$$
|Y_t - X(Y_s, s, t, W)| \leq \frac{1}{M}.
$$
Hence there exists a positive constant $C = C(N, S, W)$ such that
$$
|f(t) - f(s)| \leqslant C|Y_t - X(Y_s, s, t, W)|^{\frac{4}{5}}.
$$
$$
\Bigl|f\Bigl(\frac{i + 1}{M}\Bigr) - f\Bigl(\frac{i}{M}\Bigr)\Bigr| \leqslant \Bigl(\frac{C}{M^{\frac{4}{3}}}\Bigr)^{\frac{4}{5}},
$$
and consequently
$$
|f(r)| \leqslant \frac{C}{M^{\frac{1}{15}}}.
$$
Due to the arbitrariness of $k$ we conclude
$$
f(r) = X(x, 0, r, W) - Y_r = 0.
$$
Since $r$ was an arbitrary dyadic number in $[0, 1]$  with  a sufficiently large denominator,
the continuity of $Y_t$ and $X(x, 0, t, W)$ implies the equality $Y_t = X(x, 0, t, W)$ for each $t \in [0, 1]$.
The proof is complete.
\end{proof}

\centerline{{\bf\it Acknowledgment}}
\vskip .1in
I would like to thank Enrico Priola for pointing out the incompleteness of the proof of the property 4 in Proposition 2.3 in \cite{Me},
fruitful discussions and comments.


\begin{thebibliography}{}

\bibitem{D}
Davie A.M. Uniqueness of solutions of stochastic differential equations.
International Mathematics Research Notices, 2007, V.~2007.

\bibitem{FF}
Fedrizzi E., Flandoli F.
H\"older flow and differentiability for SDEs with nonregular drift.
Stoch. Anal. Appl., 2013, V. 31, N4, P. 708--736.

\bibitem{FF2}
Fedrizzi E., Flandoli F.
Pathwise uniqueness and continuous dependence for SDEs with nonregular drift.
arXiv preprint arXiv:1004.3485, 2010.

\bibitem{VK}
Van Kampen E.R. Remarks on systems of ordinary differential equations.
Amer. J. Math., 1937, V. 59, N1, P. 144--152.

\bibitem{F}
Flandoli F. Regularizing properties of Brownian paths and a result of Davie.
Stochastics and Dynamics, 2011, V. 11, N02n03,  P. 323--331.

\bibitem{FPS}
F\"ollmer H., Protter P., Shiryaev A.N.
 Quadratic covariation and an extension of Ito's formula.
 Bernoulli, 1995, V.1, N1-2, P. 149--169.

\bibitem{KT}
Kolmogorov A.N., Tikhomirov V. M.
 $\varepsilon$-entropy and $\varepsilon$-capacity of sets in function spaces.
Uspekhi Matem. Nauk, 1959, V. 14, N2, P. 3--86 (in Russian). English translation: Amer. Math. Soc. Transl. Ser. 2, 1961, V. 17, P. 277--364

\bibitem{KR}
Krylov N.V., R\"ockner M. Strong solutions of stochastic equations with
singular time dependent drift. Probab. Theory Related Fields, 2005, V. 131, N2, P. 154--196.

\bibitem{Ka}
Kallenberg O. Foundations of modern probability. Probability and its Applications.
Springer-Verlag, New York, II edition, 2002

\bibitem{PR}
Priola E. Davie's type uniqueness for a class of SDEs with jumps. preprint arXiv:1509.07448 2015.

\bibitem{Me}
Shaposhnikov A.V. Some remarks on Davie's uniqueness theorem. 
preprint arXiv:1401.5455 2014.

\end{thebibliography}
\end{document}